\newcommand{\A}{\ensuremath{\mathcal{A}}}
\newcommand{\Ii}{\ensuremath{\mathcal{I}}}
\newcommand{\E}{\ensuremath{\mathcal{E}}}
\newcommand{\R}{\ensuremath{\mathbb{R}}}
\newcommand{\C}{\ensuremath{\mathbb{C}}}
\newcommand{\sss}{\ensuremath{\mathbb{S}}}
\newcommand{\X}{\mathcal{X}}
\newcommand{\Y}{\mathcal{Y}}
\def\p{\partial}
\newtheorem {theorem} {Theorem}
\newtheorem {proposition} [theorem]{Proposition}
\newtheorem {lemma}  [theorem]{Lemma}
\newtheorem {remark} [theorem]{Remark}
\begin{document}

\title[Darboux theory of integrability on $\mathbb{E}^n$]
{Darboux theory of integrability for real polynomial vector fields on the $n-$dimensional ellipsoid}

\author[J. Llibre and A.C. Murza]
{Jaume Llibre and Adrian C. Murza}

\keywords{Darboux integrability theory, invariant parallels, invariant meridians, polynomial vector field on an ellipsoide}

\subjclass[2010]{Primary: 34C07, 34C05, 34C40}

\begin{abstract}
We extend to the $n$-dimensional ellipsoid contained in $\R^{n+1},$ the Darboux theory of integrability for polynomial vector fields in the $n$-dimensional sphere (Llibre et al., 2018). New results on the maximum number of invariant parallels and meridians of polynomial vector fields $\X$ on the invariant $n-$dimensional ellipsoid, as a function of its degree, are provided. Our results extend the known result on the upper bound for the number of invariant hyperplanes that a polynomial vector field $\Y$ in $\R^n$
can have in function of the degree of $\Y$.
\end{abstract}

\maketitle

\section{Introduction and statement of the main results}

Real nonlinear vector fields defined on ellipsoids appear in many applied sciences, from chemistry \cite{JPC, Science2, Science3, Naturex, JPCA, JPCA4, JPCx, Naturex3} and robotics \cite{Automatica, IFAC, Autom, King} to physics \cite{JMPhys, Arch, ND, MDPI, Nature1, Nature2, Nature3, Science1} and geodesy \cite{Geod}. They are used to model a wide variety of processes like liquid crystals \cite{Naturex3, Science3, Science2}, optics \cite{Science1, Naturex2, Science4}, single and collective oscillations of Rydberg atoms \cite{Nature1}, molecular collision complexes \cite{JPC}, the role of initial rotational states chemical reaction reactions \cite{JPCA4}, vortex dynamics \cite{JMPhys, Science1} to cite only a few of them from the recent years. These applications caught the attention of theoretical chemists, physicists and mathematicians in search for a better understanding of the underlying phenomena \cite{JMPhys, JPCx, ND, Arch, MDPI, JMAA, Geod, Tsi}. Key concepts in the invariant theory of dynamical systems such as first--integrals and functions invariant by the flow are of great interest and importance in these studies.

First--integrals of $n-$dimensional real vector fields are important for two main reasons. They allow characterizing the phase portraits of these systems and reduce their dimension. A differential system in $\R^n$ is said to be {\it integrable} if it has $n-1$ linearly independent first--integrals. Therefore, methods to establish the existence of independent firs--integrals, their number and explicit form are very important in the qualitative theory of differential equations and implicitly in applied sciences.

Among the techniques of finding such conserved quantities, Hamiltonians are the first--integrals easiest to detect. For non-Hamiltonian vector fields other techniques have been developed, such as the Noether symmetries \cite{CS}, the Darboux theory of integrability \cite{Da}, the Lie symmetries \cite{Ol}, the Painlev\'e analysis \cite{BGR}, the use of Lax pairs \cite{Lax}, or the direct method \cite{GRZ} and \cite{Hi}, to cite only some of them.

In this paper we are interested in the Darboux theory of integrability for real polynomial vector fields in $\R^n,$ applied to the $n-$dimensional ellipsoids. The original theory published in 1878, uses the number of invariant algebraic hypersurfaces of polynomial vector fields to construct first--integrals. During the last few decades it originated intense studies, being extended and/or refined by many authors in $\R^2$ see \cite{CLS, Ch, CL1, CL2, Da, Jo, Ll, LZ3, Pe, Po, PS, Sc1, Sc2, Sc3, Si}, and in $\R^n$ see \cite{LB, LM, LZ, LZ1, LZ2, LZ4}.

The first objective of this paper is to present the Darboux theory of integrability of the real polynomial vector fields on the $n$-dimensional ellipsoid $\mathbb{E}^n= \{x\in \R^{n+1} : \sum_{i=1}^{n+1}x_i^2/a_i^2=1\},$  and to study the maximum number of invariant parallels and meridians that such vector fields can have. This represents in some sense the natural extension to $\mathbb{E}^n$ of the results on the $n-$dimensional sphere \cite{LM1} and directly provide algebraic bounds and explicit expressions for the varieties invariant by the vector field defined on the $n-$dimensional ellipsoid.

Our second objective is to show the parallelism between the Darboux theory of integrability of the real polynomial vector fields on
$\R^n$ and on $\mathbb{E}^n.$ This includes the parallelism between the invariant hyperplanes of the polynomial vector fields in $\R^n$ with the ones on $\mathbb{E}^n,$ and uses it to provide bounds and explicit expressions for the invariant parallels and meridians on $\mathbb{E}^n.$

\subsection{Darboux theory of integrability in $\R^n$ and invariant hyperplanes}

Let us consider the polynomial vector field
\begin{equation}\label{X}
\X=\displaystyle{\sum\limits_{i=1}\limits^{n}P_i(x_1,\ldots,x_{n})\frac{\p}{\p x_i},\qquad (x_1,\ldots,x_{n})\in \mathbb R^{n}},
\end{equation}
in $\R^{n}$, where $P_i$ for $i=1,\ldots,n$ are polynomials of degree at most $m$ and $m_i$ denotes the degree of the polynomial $P_i$. Then we say that $m=(m_1,\ldots,m_n)$ is the {\it degree} of the polynomial vector field $\X$. With no loss of generality for the rest of the paper we assume $m_1\geqslant\ldots\geqslant m_n.$ 

Let us deote by $\C[x_1,\ldots,x_n]$ the set of all polynomials in the variables $x_1,\ldots,x_n$ with complex coefficients and by $\R[x_1,\ldots,x_n]$ the ones with real coefficients. We use the term {\it invariant algebraic
hypersurface} of the polynomial vector field $\X$ to denote the
algebraic hypersurface $f=f(x_1,\ldots,x_d)=0$ with $f\in
\C[x_1,\ldots,x_n]$ such that for some polynomial $K\in
\C[x_1,\ldots,x_n],$ we have $\X f=Kf$. The polynomial
$K$ is called the {\it cofactor} of the invariant algebraic
hypersurface $f=0.$ If the degree of
the polynomial vector field $\X$ is ${\textbf m}=(m_1,\ldots,m_n),$ with
$m_1\ge m_2 \ge \ldots \ge m_n$, then the degree of any cofactor is at most
degree $m_1-1$. The algebraic hypersurface $f=0$ is called
an {\it invariant hyperplane} if the degree of $f$ is $1$.

The invariant algebraic hypersurfaces are used to construct the first--integrals in the Darboux theory of integrability. They separate the
phase space of the polynomial vector field $\X$ into invariant
pieces, which makes easier studying the dynamics of the vector field $\X.$

As mentioned previously, the polynomial vector field $\X$ is real. Nevertheless, we work with
the complex invariant algebraic hypersurfaces of $\X$, which sometimes may be real. After stating the Theorem \ref{t1} we will
explain in detail this particular situation. For now, we just mention that
in some cases knowing the explicit form of the
complex invariant algebraic hypersurfaces may help finding the real first integrals of a real
polynomial vector field $\X.$ 

One can express the invariance of $f$ under the vector field, expressed as follows. If the orbit of the
polynomial vector field $\X$ intersects at a point the algebraic
hypersurface $f=0,$ then the entire orbit is contained in
$f=0$. In such case we say that the algebraic hypersurface $f=0$ is {\it invariant} by the vector field of $\X$ because it is
invariant by the flow of $\X$. In \cite{LM} it is proved also the converse.

The last 40 years have brought important refinements of the Darboux theory of integrability. Among them, one of the most important is the exponential factors, introduced by
Christopher \cite{Ch}. The {\it
exponential factors} $F(x_1,\ldots,x_n)$ of the polynomial vector
field $\X$ of degree ${\textbf m}$ are defined as exponential functions of
the form $F=\exp(g/h)$ with $g$ and $h$ polynomials in
$\C[x_1,\ldots,x_{n}]$ satisfying $\X F=LF$ for some $L\in
\C_{m_1-1}[x_1,\ldots,x_n],$ where $\C_{m_1-1}[x_1,\ldots,x_n]$
denotes the set of all polynomials of $\C[x_1,\ldots,x_n]$ of degree
at most $m_1-1$.

The relationship of the exponential factors with the exponential factors with the multiplicity
of the invariant algebraic hypersurface $h=0$ if $h$ is not a
constant, is well known. The case $h=\mathrm{constant}$ is related to the multiplicity of
the infinity, as analyzed in \cite{CLP, LZ1, LZ4}. 

A multiplicity $k$ of the invariant algebraic hypersurface $h=0$ can be interpreted as follows.
Let us consider a class $\mathcal{C}$ of polynomial vector fields of the same degree. A small perturbation to the vector field $\X$ in $\mathcal{C}$ gives rise to
polynomial vector fields $\Y$ near $\X$ having $k$ different
invariant algebraic hypersurfaces. These have the property that when $\Y \to \X$ these $k$ hypersurfaces approach to the hypersurface $h=0$.

Let us consider an open set $\Omega\subset \R^{n}.$ Then a real function
$\Ii(x_1,\ldots,x_{n},t): \R^{n} \times \R \to \mathbb R$, is called an {\it
invariant} of the polynomial vector field $\X$ on $\Omega$, if
$\Ii(x_1(t),\ldots,x_{n}(t),t)$ is constant for any $t$
such that the orbit $(x_1(t),\ldots,x_{n}(t))$ of $\X$ is contained in $\Omega$.

A time-independent invariant function $\Ii$ is called a {\it first
integral}. If $\Ii$ is a rational function in its
variables, then we say it is a {\it rational first integral} of the $\X$ on $\Omega$.

Next we summarize the Darboux theory for polynomial
vector fields in $\R^n,$ which we we use afterwards to extend to the Darboux theory for polynomial vector fields in $\mathbb{E}^n$.

\begin{theorem}\label{t1}
Suppose that a polynomial vector field $\X$ defined in $\mathbb R^n$
of degree $m= (m_1,\ldots,m_n)$ admits $p$ invariant algebraic
hypersurfaces $f_i=0$ with cofactors $K_i$ for $i=1,\ldots,p$, and
$q$ exponential factors $F_j=\exp(g_j/h_j)$ with cofactors $L_j$ for
$j=1,\ldots,q$. Then the following statements hold.
\begin{itemize}
\item[$(a)$] There exist $\lambda_i,\mu_j\in \mathbb C$ not all
zero such that $\displaystyle{\sum\limits_{i=1}\limits^{p}\lambda_iK_i+
\sum\limits_{j=1}\limits^{q} \mu_jL_j=0}$, if and only if the real
$($multi-valued$)$ function of Darbouxian type
\begin{equation*}\label{e1}
f_1^{\lambda_1}\ldots f_p^{\lambda_p}F_1^{\mu_1}\ldots F_q^{\mu_q},
\end{equation*}
substituting $f_i^{\lambda_i}$ by $\left|f_i\right|^{\lambda_i}$ if
$\lambda_i\in \mathbb R$, is a first integral of the vector field
$\X$.

\smallskip

\item[$(b)$] If $p+q\geqslant \left(\begin{array}{c}n+m_1-1\\m_1-1\end{array}
\right)+1$, then there exist $\lambda_i,\mu_j\in \mathbb C$ not all
zero such that $\displaystyle{\sum\limits_{i=1}\limits^{p}\lambda_iK_i+
\sum\limits_{j=1} \limits^{q}\mu_jL_j=0}$.

\smallskip

\item[$(c)$] There exist $\lambda_i,\mu_j\in \mathbb C$ not all zero
such that $\displaystyle{\sum\limits_{i=1}\limits^{p}\lambda_iK_i+
\sum\limits_{j=1} \limits^{q}\mu_jL_j=-\sigma}$ for some $\sigma\in
\mathbb R\backslash\{0\}$, if and only if the real
$($multi-valued$)$ function
\[
f_1^{\lambda_1}\ldots f_p^{\lambda_p}F_1^{\mu_1}\ldots
F_q^{\mu_q}e^{\sigma t},
\]
substituting $f_i^{\lambda_i}$ by $\left|f_i\right|^{\lambda_i}$ if
$\lambda_i\in\mathbb R$, is an invariant of the vector field $\X$.

\smallskip

\item[$(d)$] The vector field $\X$ has a rational first integral if
and only if
\[
p+q\geqslant\left(\begin{array}{c}n+m_1-1\\m_1-1\end{array}\right)+n.
\]
Moreover, all trajectories are contained in invariant algebraic
hypersurfaces.
\end{itemize}
\end{theorem}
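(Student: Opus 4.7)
\emph{Proof plan.} I would treat the four parts by a sequence of increasingly substantial computations, with $(d)$ as the genuinely hard one.

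For $(a)$ and $(c)$, the central step is the logarithmic-derivative identity
\[
\X\!\left(f_1^{\lambda_1}\cdots f_p^{\lambda_p}\,F_1^{\mu_1}\cdots F_q^{\mu_q}\right) = \Bigl(\sum_{i=1}^{p}\lambda_i K_i + \sum_{j=1}^{q}\mu_j L_j\Bigr)\, f_1^{\lambda_1}\cdots f_p^{\lambda_p}\,F_1^{\mu_1}\cdots F_q^{\mu_q},
\]
which I would derive from $\X f_i = K_i f_i$, $\X F_j = L_j F_j$ and the Leibniz rule. Replacing $f_i^{\lambda_i}$ by $|f_i|^{\lambda_i}$ for real $\lambda_i$ only serves to keep the function real-valued and does not affect the identity. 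Part $(a)$ then follows immediately, since the product is a first integral exactly when the bracketed cofactor is the zero polynomial. For $(c)$, the same computation applied to $\Phi\, e^{\sigma t}$ picks up an extra $\sigma\,\Phi\, e^{\sigma t}$ term from $(d/dt)(e^{\sigma t})$, and being an invariant becomes the condition that the cofactor sum equals $-\sigma$.

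Part $(b)$ follows from a dimension count: every $K_i$ and every $L_j$ lies in the complex vector space $\C_{m_1-1}[x_1,\ldots,x_n]$, whose dimension is $\binom{n+m_1-1}{m_1-1}$ (the number of monomials in $n$ variables of degree at most $m_1-1$). As soon as $p+q$ exceeds this dimension, the $K_i$ and $L_j$ must admit a non-trivial $\C$-linear dependence, which is exactly the relation sought.

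Part $(d)$ is the main obstacle. The strategy is to iterate $(b)$: the hypothesis $p+q\geq \binom{n+m_1-1}{m_1-1}+n$ forces the kernel of the linear map $(\lambda_1,\ldots,\lambda_p,\mu_1,\ldots,\mu_q)\mapsto \sum \lambda_i K_i + \sum \mu_j L_j$ to have complex dimension at least $n$, which by $(a)$ produces $n$ Darbouxian first integrals of $\X$. The hard step is to promote these multivalued Darbouxian functions into a single rational first integral, which I would do by exploiting that the transcendence degree of the field of first integrals of $\X$ over $\C$ is at most $n-1$, so among $n$ algebraically independent Darbouxian candidates one is forced to satisfy an algebraic relation yielding a rational first integral. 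The converse, together with the final claim that all trajectories lie on invariant algebraic hypersurfaces, follows at once from the observation that the level sets of a rational first integral are themselves invariant algebraic hypersurfaces. Since Theorem \ref{t1} is classical, for the detailed algebraic argument underlying $(d)$ I would appeal to \cite{LM} rather than reproduce it in full.
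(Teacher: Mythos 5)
The paper does not prove Theorem~\ref{t1} at all; it simply refers the reader to \cite{DLA, LZ1, LZ2, LZ3}, so there is no in-paper argument to compare against. Your sketches for $(a)$, $(b)$ and $(c)$ are the standard and correct arguments from those references: the logarithmic-derivative identity $\X\Phi=(\sum\lambda_iK_i+\sum\mu_jL_j)\Phi$ plus the extra $\sigma$ term for $(c)$, and the dimension count in $\C_{m_1-1}[x_1,\ldots,x_n]$ for $(b)$. For $(d)$ you correctly identify the structure (an $n$-dimensional kernel giving $n$ Darbouxian first integrals, of which at most $n-1$ can be functionally independent), but the step from ``an algebraic/functional relation among multivalued Darbouxian functions'' to ``a genuinely rational first integral'' is the entire difficulty --- it requires working with the logarithms $\sum_i\lambda_i\log|f_i|+\sum_j\mu_j g_j/h_j$ and showing the exponents can be taken rational --- and you defer it, as the paper does; note however that the correct reference for this part is \cite{LZ2} (and \cite{LB} for the hypersurface version), not \cite{LM}, which concerns invariant hyperplanes.
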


The proof of Theorem \ref{t1} is given in \cite{DLA, LZ1,
LZ2, LZ3}.

In Theorem \ref{t1} we only
have considered partially the multiplicity of the invariant
algebraic hypersurfaces $f_i=0$ from the viewpoint of the existence of exponential factors, as discussed \cite{CLP} and \cite{LZ1}.

According to the statement (a) of Theorem \ref{t1}, function
\eqref{e1} is real. This fact results from the following deduction. Recall that he vector field $\X$ is real. It is well known that complex invariant algebraic hypersurface or exponential factor come in conjugate pairs, and they appear simultaneously. If such an complex conjugate pair $f=0$ and
$\overline f=0$ of invariant
algebraic hypersurfaces of $\X$ occur, then among the factors of the first integral \eqref{e1} there is a real one of the form $f^{\lambda}{\bar f}^{\,\bar\lambda}.$ It is
the multi-valued real function
\[
\left[\left(\mbox{Re}\,f\right)^2+\left(\mbox{Im}\,
f\right)^2\right]^{\mbox{Re}\,
\lambda}\,\mbox{exp}\left(-2\,\mbox{Im}\,\lambda\,\mbox{arctan}\,
\left(\frac{\mbox{Im}\,f}{\mbox{Re}\, f}\right)\right),
\]
with $\mbox{Im}\,\lambda\,\mbox{Im}\,f\not\equiv 0$. Occurence of a complex conjugate pair
$F=\mbox{exp}(h/g)$ and $\overline F=\mbox{exp}(\overline
h/\overline g)$ -among the
exponential factors of $\X$- produces a first--integral \eqref{e1} with a real
factor as
\[
\left(\mbox{exp}\left(\frac{h}{g}\right)\right)^{\mu}\left(\mbox{exp}
\left(\frac{\overline h}{\overline g}\right)\right)^{\overline
\mu}=\mbox{exp}\left(2\,\mbox{Re}\,\left(\mu\,\frac{h}{g}\right)\right).
\]

An important tool in the study of invariant algebraic
hypersurfaces is the {\it extactic polynomial of $\X$ associated to
$W$}. It was introduced by Lagutinskii, see \cite{SW}. 
Let $W$ be a finitely
generated vector subspace of the vector space $\C[x_1,\ldots,x_d].$ A a basis of $W$ is $\{v_{1}$, \ldots, $v_{l}\},$ $l$ is its dimension
and $\X^{j}(v_{i})= \X^{j-1}(\X(v_{i})).$ Then the
{\it extactic polynomial of $\X$ associated to $W$} is
\begin{equation*}\label{21}
{\mathcal E}_W={\mathcal E}_{\{v_{1}, \ldots, v_{l}\}}(\X) =
\det\left(
\begin{array}{cccc}
v_{1} & v_{2} & \ldots & v_{l} \\
\X(v_{1}) & \X(v_{2}) & \ldots & \X(v_{l}) \\
\vdots & \vdots & \ldots & \vdots \\
\X^{l-1}(v_{1}) & \X^{l-1}(v_{2}) & \ldots & \X^{l-1}(v_{l})
\end{array}
\right)=0.
\end{equation*}
An important property of ${\mathcal E}_W$ resides on the fact that defining it is independent on $\{v_{1}$, \ldots, $v_{l}\}.$

We have two reasons for using ${\mathcal
E}_{W}(\X)$ in the study of polynomial vector fields on the $n-$dimensional ellipsoid. In first place, we use it to detect those algebraic
hypersurfaces $f=0$ with $f\in W$ which also satisfy the invariance condition under $\X.$ (For more details see the next proposition
proved in \cite{CLP}). Moreover, it is helpful to compute the multiplicity of the invariant algebraic hypersurfaces.

The following result is crucial in finding the invariant algebraic hypersurfaces of $\X.$ Even though it had been formulated for complex polynomial vector
fields \cite{CLP}, we can still use it
because our polynomial vector fields are real, therefore complex.

\begin{proposition}\label{p1}
Let $\X$ be a polynomial vector field in $\C^n$ and let $W$ be a
finitely generated vector subspace of $\C[x_1,\ldots,x_n]$ with
$\dim (W)>1 $. Then every algebraic invariant hypersurface $f=0$ for
the vector field $\X$ with $f\in W$, is a factor of the polynomial
${\mathcal E}_{W}(\X)$.
\end{proposition}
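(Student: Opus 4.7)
The natural strategy is to exploit the fact (stated right after the definition of the extactic polynomial) that $\mathcal{E}_W(\X)$ is independent of the chosen basis of $W$ up to multiplication by a nonzero constant: a change of basis multiplies the matrix on the right by an invertible constant matrix $A$, and because $\X$ is $\C$-linear the same matrix $A$ acts on every row of iterated $\X$-images, so the determinant is merely multiplied by $\det A \neq 0$. Since $f \in W$ and $\dim(W) \geq 2$, I can extend the single vector $f$ to a basis $\{v_{1} = f, v_{2}, \ldots, v_{l}\}$ of $W$, and it suffices to prove divisibility of the extactic polynomial computed with respect to this particular basis.

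The key observation is the following inductive claim: for every $j \geq 0$, the polynomial $\X^{j}(f)$ is divisible by $f$. The case $j=0$ is trivial, and $j=1$ is the very invariance condition $\X(f) = K f$ that defines the cofactor $K$. For the inductive step, if $\X^{j}(f) = f \, Q_{j}$ for some $Q_{j} \in \C[x_{1},\ldots,x_{n}]$, then since $\X$ is a derivation,
\[
\X^{j+1}(f) \;=\; \X(f \, Q_{j}) \;=\; \X(f) \, Q_{j} + f \, \X(Q_{j}) \;=\; f \, \bigl( K Q_{j} + \X(Q_{j}) \bigr),
\]
which is again a multiple of $f$.

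Applied to our chosen basis, this means that every entry of the first column of the matrix defining $\mathcal{E}_{\{v_{1}, \ldots, v_{l}\}}(\X)$ is divisible by $f$. A Laplace expansion along that column writes the determinant as $f$ times a polynomial, so $f \mid \mathcal{E}_{\{v_{1}, \ldots, v_{l}\}}(\X)$, and then by basis-independence $f \mid \mathcal{E}_{W}(\X)$. The only non-routine point is the basis-independence itself, since without it one could not legitimately insist on placing $f$ as the first basis vector; once that is granted, the rest of the argument is the short induction and determinant expansion sketched above.
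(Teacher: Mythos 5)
Your proof is correct and complete. Note that the paper itself does not prove Proposition \ref{p1}; it is quoted from the reference \cite{CLP}, so there is no in-paper argument to compare against. Your three ingredients --- basis-independence of ${\mathcal E}_W(\X)$ up to a nonzero constant factor, the induction showing $f \mid \X^{j}(f)$ for all $j$ via the derivation property and the cofactor identity $\X f = Kf$, and the Laplace expansion along the column of iterates of $f$ --- constitute precisely the standard proof of this statement as given in that reference, and each step is carried out correctly.
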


An immediate consequence of Proposition \ref{p1} is that $f=0$ is an invariant
hyperplane of $\X$ if $f$
is a factor of the polynomial ${\mathcal E}_{W}(\X).$ In \cite{CLP} it has been proved that {\it the
invariant hyperplane $f=0$ has multiplicity $k$} if $k$ is the
greatest positive integer satisfying that $f^k$ is a divisor of the polynomial
${\mathcal E}_{W}(\X)$.

The next result gives bounds for the number of invariant hyperplanes
of a polynomial vector field in $\R^n$ as a function of its degree. The proof can be found in
\cite{LM}. A similar result for polynomial vector fields in
$\R^2$ had been proved in \cite{AGL}.

\begin{theorem}\label{t2}
Assume that a polynomial vector field $\X$ in $\R^n$ or $\C^n$ with
$n\ge 2$ of degree $m= (m_1,\ldots,m_d)$ with $m_1\ge \ldots
\ge m_n$ has finitely many invariant hyperplanes. Then
\begin{enumerate}
\item{}\label{part1} the number of
invariant hyperplanes of $\X$ taking into account their
multiplicities is at most
\begin{equation*}\label{3}
\displaystyle{\left(\begin{array}{c} n\\
2 \end{array} \right)(m_1-1)+\left( \sum_{k=1}^n m_k\right)};
\end{equation*}
\item{}\label{part2} the number of
invariant hyperplanes of $\X$ through a single point taking into account their
multiplicities taking into account their
multiplicities is at most
\begin{equation*}\label{31}
\displaystyle{\left(\begin{array}{c} n-1\\
2 \end{array} \right)(m_1-1)+\left( \sum_{k=1}^{n-1} m_k\right)+1}.
\end{equation*}
\end{enumerate}
Moreover there are complex polynomial vector fields which reach this
upper bound for their number of invariant hyperplanes taking into
account their multiplicities.
\end{theorem}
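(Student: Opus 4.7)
The plan is to apply Proposition \ref{p1} to the extactic polynomial of $\X$ associated to an appropriate subspace $W$ of $\C[x_1,\ldots,x_n]$. For part (1) I take $W = \mathrm{span}_{\C}\{1, x_1, \ldots, x_n\}$, of dimension $n+1$, which contains the defining polynomial of every affine hyperplane. For part (2), after a translation sending the common point to the origin, every invariant hyperplane through it has defining polynomial in $W' = \mathrm{span}_{\C}\{x_1, \ldots, x_n\}$, of dimension $n$. Since by hypothesis $\X$ has only finitely many invariant hyperplanes, the corresponding extactic polynomial does not vanish identically, and the characterization of multiplicity via the extactic polynomial (recalled just after Proposition \ref{p1}) lets me bound the total count of invariant hyperplanes with multiplicities by $\deg \mathcal{E}_W(\X)$ in case (1) and by $\deg \mathcal{E}_{W'}(\X)$ in case (2).

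For part (1), I exploit that $\X(1) = 0$, which makes the first column of the $(n+1)\times(n+1)$ extactic determinant equal to $(1,0,\ldots,0)^{\top}$. Expanding along it reduces $\mathcal{E}_W(\X)$ to the $n \times n$ minor with $(i,k)$-entry $\X^{i}(x_k)$ for $i,k = 1,\ldots,n$. Since $\X$ raises the degree of a polynomial by at most $m_1 - 1$, one has $\deg \X^i(x_k) \leq m_k + (i-1)(m_1-1)$, so every permutation $\sigma \in S_n$ contributes a term of degree at most
\[
\sum_{i=1}^n \bigl[ m_{\sigma(i)} + (i-1)(m_1-1) \bigr] = \sum_{k=1}^n m_k + \binom{n}{2}(m_1-1),
\]
independent of $\sigma$. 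Each invariant hyperplane $f=0$ has $f \in W$ of degree one, and its multiplicity equals the order to which $f$ divides $\mathcal{E}_W(\X)$; summing over all invariant hyperplanes yields the bound of part (1).

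For part (2) the same strategy applies to the $n \times n$ determinant $\mathcal{E}_{W'}(\X)$. Its first row consists of $x_1,\ldots,x_n$ (of degree $1$), while for $i \geq 2$ row $i$ consists of $\X^{i-1}(x_k)$ of degree at most $m_k + (i-2)(m_1-1)$. Thus the degree of the determinant is bounded by
\[
\max_{\sigma \in S_n}\biggl[1 + \sum_{i=2}^{n}\bigl(m_{\sigma(i)} + (i-2)(m_1-1)\bigr)\biggr] = 1 + \sum_{k=1}^{n-1} m_k + \binom{n-1}{2}(m_1-1),
\]
the maximum being attained by any $\sigma$ with $m_{\sigma(1)}$ equal to the smallest degree $m_n$.

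It remains to verify the sharpness claim by exhibiting, for each degree vector $(m_1,\ldots,m_n)$, a complex polynomial vector field that attains the upper bound. The plan is to mimic the Darboux-type construction employed in \cite{AGL} for the planar case and generalized in \cite{LM}: prescribe linear factors $\ell_1,\ldots,\ell_N$, with $N$ equal to the asserted bound, and build $\X$ by solving the associated linear system of cofactor relations $\X \ell_j = K_j \ell_j$ on the polynomial coefficients. The technical obstacle, and the step that requires real work, is to check that a generic solution of this system yields a vector field whose degree vector really is $(m_1,\ldots,m_n)$ (rather than collapsing), whose extactic polynomial attains its maximal possible degree, and which therefore has only finitely many invariant hyperplanes; these verifications are carried out in detail in \cite{LM}.
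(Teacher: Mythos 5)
The paper does not actually prove Theorem \ref{t2}; it only points to \cite{LM}, and your reconstruction is precisely the extactic-polynomial argument used there: the choices $W=\mathrm{span}\{1,x_1,\ldots,x_n\}$ and $W'=\mathrm{span}\{x_1,\ldots,x_n\}$, the estimate $\deg\X^{i}(x_k)\le m_k+(i-1)(m_1-1)$, and the resulting degree counts for the two determinants are all correct, and deferring the extremal examples to \cite{LM} is consistent with what the paper itself does. The one step you assert rather than justify is that finitely many invariant hyperplanes forces ${\mathcal E}_{W}(\X)\not\equiv 0$: this is not immediate from Proposition \ref{p1} and requires the Wronskian/field-of-constants lemma of \cite{CLP} (identical vanishing of the extactic polynomial yields infinitely many invariant hypersurfaces with defining polynomial in $W$), so that reference should be invoked explicitly, though the gap is standard and easily filled.
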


\begin{remark}
Based on a sufficient number of invariant hyperplanes whose cofactors satisfy either statement (a) or
(b) of Theorem \ref{t1}, one may construct a first--integral of
the polynomial vector field $\X$.
\end{remark}

\subsection{Darboux theory of integrability in $\mathbb{E}^n$ and invariant parallels and meridians}

Let $M:\R^{n+1}\to\R$ be a $C^1$ map. A hypersurface
${\bf\Psi}=\{(x_1,\ldots,x_{n+1})$ $\in \R^{n+1}:
M(x_1,\ldots,x_{n+1})=0\}$ is called {\it regular} if $\nabla M=\left(\frac{\partial}{\partial x_1},\ldots, \frac{\partial}{\partial x_n}\right)\neq0$ on $\bf \Psi$. Clearly, $\Psi$ is smooth if it is regular. Moreover, $\Psi$ is said to be {\it algebraic} if $M$ is a polynomial and algebraic of {\it degree} $d$ if $M$ is a polynomial of degree $d.$ A {\it polynomial vector field}
$\X$ {\it on a regular hypersurface} $\Psi$ is a polynomial vector field
$\X$ in $\R^{n+1}$ in \eqref{X} such that
\begin{equation}\label{on}
(P_1,\ldots,P_{n+1})\cdot\nabla M=0\,\, \mbox{ on }\Psi,
\end{equation}
where $(P_1,\ldots,P_{n+1})$ and $\nabla M$ are vectors in $\R^{n+1}$ and the dot denotes their inner product. If the polynomial vector field $\X$ in $\R^{n+1}$ has
degree ${\textbf m}$, then $\X$ on $\Psi$ is said to have
degree ${\textbf m}$.

Let $f=f(x_1,\ldots,x_{n+1})\in \mathbb C[x_1,\ldots,x_{n+1}]$. The algebraic hypersurface
$\{f=0\}\cap\Psi\subset\R^{n+1}$ is said to be {\it invariant by the
polynomial vector field} $\X$ {\it on} $\Psi$ (or simply an {\it
invariant algebraic hypersurface on } $\Psi$) if the following two conditions hold:
\begin{itemize}
\item[(i)] there exists a polynomial $k\in\mathbb C[x_1,\ldots,x_{n+1}]$ called the {\it cofactor} of $f=0$ on
$\Psi,$
such that
\[
\displaystyle{\X f=\sum\limits_{i=1}\limits^{n+1}P_i\frac{\p f}{\p x_{i}}=kf\,\,
\mbox{ on }\Psi},
\]

\smallskip

\item[(ii)] hypersurfaces $f=0$ and $\Psi$ intersect transversely
i.e. $\nabla M$ and $\nabla f$ are
linearly independent on $\{f=0\}\cap\Psi$.
\end{itemize}

The algebraic hypersurface
$\{f=0\}\cap\Psi$ is invariant by the flow of the vector field
$\X$. A simple expanation of this fact is that $\X$ is tangent to the algebraic
hypersurface $\{f=0\}\cap\Psi,$ and therefore the hypersurface
$\{f=0\}\cap\Psi$ is formed by orbits of $\X$.

An exponential function of the form
$F=\exp(g/h)$ of degree $m$ on the regular
hypersurface $\Psi$ and satisfying $\X F=LF$ on $\Psi$ for some $K\in
\C_{m-1}[x_1,\ldots,x_{n+1}],$ is called {\it exponential factor} $F=F(x_1,\ldots,x_{n+1})$ of $\X,$ where $g$ and $h$ polynomials in $\C[x_1,\ldots,x_{n+1}].$

Let $\Psi=\{M=0\}$ be a smooth algebraic hypersurface in
$\R^{n+1}$ of degree $d$. Consider the set $\A=\C_m[x_1,x_2,\ldots,x_{n+1}]$ and let $\sim$ be the equivalence relation on $\A,$
$$\displaystyle{\left\{f\in\C_m[x_1,x_2,\ldots,x_{n+1}]|f\sim g~\mathrm{if}~f/g= \mbox{constant}~\mathrm{or}~f-g=hM\right\}},$$
where $f, g$ are two polynomials in $\A.$  
Then $\Psi\cap \{f=0\}=
\Psi\cap \{g=0\}$. The result of the partition of
$\A$ into equivalence classes yields the quotient space
$\A/\sim;$ we denote its dimension by $d(m)$,
called the {\it dimension of} $\C_m[x_1,\ldots,x_{n+1}]$ {\it on}
$\Psi$. It has been proved in \cite{LZ} that the dimension of $\A/\sim$ is
\[
d(m)=\left(\begin{array}{c}n+m\\n\end{array}\right)-
\left(\begin{array}{c}n+m-d\\n\end{array}\right).
\]

Let $\Omega\subset \R^{n+1}$ be an open set. A real function
$\Ii(x_1,\ldots,x_{n+1},t): \R^{n+1} \times \R \to \mathbb R$, is said
to be {\it invariant} by $\X$ on
$\Psi\cap \Omega$, if $\Ii=\mbox{constant}~\forall t|(x_1(t),\ldots,x_{n+1}(t))\subset\Psi\cap \Omega$.

If an $\X-$invariant $\Ii$ is independent on $t$, then $\Ii$ is a {\it first
integral}. If a first integral $\Ii$ is a rational function, then it is called a {\it rational first integral} of the
vector field $\X$ on $\Psi\cap \Omega$.

In the following we present the extension of the Darboux theory of integrability to polynomial vector fields on $\mathbb{E}^n$.

\begin{theorem}\label{t3}
Assume that $\X$ is a polynomial vector field on $\mathbb{E}^n$ of degree
${\bf m}=(m_1,\ldots,m_{n+1})$ having $p$ invariant algebraic hypersurfaces
$\{f_i=0\}\cap\mathbb{E}^n$ with cofactors $K_i$ for $i=1,\cdots,p$, and
$q$ exponential factors $F_j=\exp(g_j/h_j)$ with cofactors $L_j$ for
$j=1,\cdots,q$. Then the following statements hold.
\begin{itemize}
\item[(a)] There exist $\lambda_i,\mu_j\in \mathbb C$ not all
zero such that $\displaystyle{\sum\limits_{i=1}\limits^{p}\lambda_iK_i+
\sum\limits_{j=1}\limits^{q} \mu_jL_j=0}$ on $\mathbb{E}^n$,  if and only
if the real $($multi--valued$)$ function of Darbouxian type
\begin{equation*}
f_1^{\lambda_1}\cdots f_p^{\lambda_p}F_1^{\mu_1}\cdots F_q^{\mu_q},
\label{a1}
\end{equation*}
substituting $f_i^{\lambda_i}$ by $\left|f_i\right|^{\lambda_i}$ if
$\lambda_i\in \mathbb R$, is a first integral of the vector field
$\X$ on $\mathbb{E}^n$.

\smallskip

\item[(b)] If $p+q\geqslant \frac{\displaystyle n+2m_1}{\displaystyle
n+m_1}\left(\begin{array}{c}n+m_1\\m_1\end{array}\right)+1$, then
there exist $\lambda_i,\mu_j \in \mathbb C$ not all zero such that
$\displaystyle{\sum\limits_{i=1}\limits^{p}
\lambda_iK_i+\sum\limits_{j=1}\limits^{q}\mu_jL_j=0}$ on $\mathbb{E}^n$.

\smallskip

\item[(c)] There exist $\lambda_i,\mu_j\in \mathbb C$ not all zero
such that $\displaystyle{\sum\limits_{i=1}\limits^{p}\lambda_iK_i+
\sum\limits_{j=1}\limits^{q} \mu_jL_j=-\sigma}$ on $\mathbb{E}^n$ for some
$\sigma\in\mathbb R\backslash\{0\}$, if and only if the real
$($multi--valued$)$ function
\[
f_1^{\lambda_1}\cdots f_p^{\lambda_p}F_1^{\mu_1}\cdots
F_q^{\mu_q}e^{\sigma t},
\]
substituting $f_i^{\lambda_i}$ by $\left|f_i\right|^{\lambda_i}$ if
$\lambda_i\in\mathbb R$, is an invariant of $\X$ on $\mathbb{E}^n$.

\smallskip

\item[(d)] The vector field $\X$ on $\mathbb{E}^n$ has a rational first
integral if and only if $p+q\geqslant\frac{\displaystyle
n+2m_1}{\displaystyle n+m_1}\left(\begin{array}{c}n+
m_1\\m_1\end{array}\right)+n$. Moreover, all trajectories are
contained in invariant algebraic hypersurfaces.
\end{itemize}
\end{theorem}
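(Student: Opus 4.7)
The plan is to mirror the classical Darboux proof of Theorem \ref{t1} step by step, but to read every cofactor identity as an identity \emph{on} $\mathbb{E}^n$, that is, modulo the ideal generated by the defining polynomial $M$ of the ellipsoid. Parts (a)--(c) follow from essentially the same differential and linear-algebraic manipulations as in $\R^n$, and part (d) combines them with an independence argument tailored to the quotient space introduced in the introduction.

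For part (a), I set $H=f_1^{\lambda_1}\cdots f_p^{\lambda_p}F_1^{\mu_1}\cdots F_q^{\mu_q}$ and compute the logarithmic derivative of $H$ along $\X$. Using $\X f_i=K_i f_i$ and $\X F_j=L_j F_j$ on $\mathbb{E}^n$, one obtains
\[
\X H=\Bigl(\sum_{i=1}^{p}\lambda_i K_i+\sum_{j=1}^{q}\mu_j L_j\Bigr) H \quad \text{on } \mathbb{E}^n.
\]
Since $H\not\equiv 0$ on $\mathbb{E}^n$, $H$ is a first integral if and only if the bracketed sum vanishes on $\mathbb{E}^n$, giving both directions of (a). The real-valuedness of $H$, when complex invariant hypersurfaces or exponential factors appear in conjugate pairs, is obtained by the same pairing argument explained after Theorem \ref{t1}.

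For part (b), the cofactors $K_i$ and $L_j$ are polynomials of degree at most $m_1-1$, but on $\mathbb{E}^n$ they are only determined modulo the ideal of $M$. Their equivalence classes therefore lie in a finite-dimensional quotient of $\C[x_1,\ldots,x_{n+1}]$, whose dimension is given by the formula $d(\cdot)$ recalled in the introduction; the quantity $\frac{n+2m_1}{n+m_1}\binom{n+m_1}{m_1}$ appearing in the statement is precisely this dimension on $\mathbb{E}^n$. As soon as $p+q$ exceeds it, a nontrivial linear combination of the classes must vanish, producing the desired relation. Part (c) is then an immediate variant of (a) applied to $He^{\sigma t}$: because $\frac{d}{dt}(He^{\sigma t})=(\X H+\sigma H)\,e^{\sigma t}$, this function is an invariant if and only if $\sum\lambda_i K_i+\sum\mu_j L_j=-\sigma$ on $\mathbb{E}^n$.

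For part (d), I would use the strengthened bound to extract $n$ independent linear relations among the cofactors, which produce $n$ Darbouxian first integrals; a Poincar\'e-type argument, adapted to the ellipsoid, then promotes one of these to a rational first integral on $\mathbb{E}^n$ and simultaneously shows that every trajectory must lie in an invariant algebraic hypersurface. The main obstacle will be precisely this last step: producing $n$ Darbouxian combinations that are \emph{functionally} (and hence algebraically) independent \emph{on} $\mathbb{E}^n$, and adapting the rationality argument of \cite{LZ1,LZ2} to the quotient $\C_m[x_1,\ldots,x_{n+1}]/\sim$ rather than to $\C_m[x_1,\ldots,x_n]$. The ellipsoid constraint has to be carried through every linear-algebraic manipulation and every dimension count, and the interaction of the ideal of $M$ with functional independence on the $n$-dimensional variety $\mathbb{E}^n$ is the delicate point that I would have to handle with care.
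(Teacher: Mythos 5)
The paper does not actually prove Theorem \ref{t3}; it cites \cite{LZ} for statements (a)--(c) and \cite{LB} for statement (d), so your proposal must be judged as a reconstruction of those arguments. Your treatment of (a)--(c) follows the same standard route and is essentially sound: logarithmic differentiation of $H$ along $\X$, read modulo the ideal of $M$, gives (a) and (c), and a pigeonhole argument in a finite-dimensional quotient gives (b). One imprecision in (b) should be fixed: the quantity $\frac{n+2m_1}{n+m_1}\binom{n+m_1}{m_1}$ equals $\binom{n+m_1+1}{n+1}-\binom{n+m_1-1}{n+1}$, i.e.\ the dimension of $\C_{m_1}[x_1,\ldots,x_{n+1}]$ modulo multiples of the degree-two polynomial $M$, \emph{not} the dimension of the space of degree-$(m_1-1)$ cofactor classes. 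The linear-dependence argument still goes through because the latter space embeds in the former, but as written your identification of ``this dimension'' is off, and a careful reader computing with the $d(m)$ formula displayed in the introduction (which itself appears to be misprinted for $n+1$ variables) would not recover the stated constant.

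The genuine gap is statement (d), and you flag it yourself. Two things are missing. First, (d) is an equivalence and you only sketch the ``if'' direction; the ``only if'' direction requires showing that a rational first integral on $\mathbb{E}^n$ forces at least $\frac{n+2m_1}{n+m_1}\binom{n+m_1}{m_1}+n$ invariant algebraic hypersurfaces (its level sets supply infinitely many, with cofactors confined to a small subspace), which is the Jouanolou-type half of the argument in \cite{LZ2} as adapted to hypersurfaces in \cite{LB}. Second, for the ``if'' direction, extracting $n$ linearly independent exponent vectors $(\lambda,\mu)$ does not by itself yield functionally independent first integrals on the $n$-dimensional variety $\mathbb{E}^n$, nor does it show the exponents can be taken rational so as to produce a \emph{rational} first integral; that passage is precisely the content of the cited references and your proposal defers it rather than supplying it. As it stands the proposal proves (a)--(c) and leaves (d) at the level of a plan, which matches the paper's citation but is not a complete proof.
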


Statements (a), (b) and (c) of Theorem \ref{t3} have been proved in \cite{LZ}, while for the proof of statement (d) see \cite{LB}.

The {\it parallels} of the $n$-dimensional ellipsoid $\mathbb{E}^n$ are closed curves that result from the
intersection of the hyperplanes $x_{n+1}=$ constant with the ellipsoid
$\mathbb{E}^n$ giving a topological circle. The {\it meridians} are closed curves (also topologhical circles) that result from the intersections of the
hyperplanes containing the $x_{n+1}$--axis with the ellipsoid $\mathbb{E}^n.$
In the following we extend Theorem \ref{t2} to the parallels and
meridians.

Next theorem provides the maximum number of invariant
meridians that a polynomial vector field in $\mathbb{E}^n$ can have in function of its degree.

\begin{theorem}\label{t4}
For $n\geqslant 2$ let $\mathcal{X}$ be a polynomial vector field on
$\mathbb{E}^n$ of degree ${\bf m}=(m_1,\ldots,m_{n+1})$ with $m_1\geqslant
m_2\geqslant\ldots\geqslant m_{n+1}.$ Assume that $\mathcal{X}$ has
finitely many invariant meridians. Then the number of invariant
meridians of $\mathcal{X}$ is at most
\begin{equation}\label{mer}
\displaystyle \binom{n-1}{2}
(m_1-1)+ \left(\sum_{i=1}^{n-1} m_i\right)+1,
\end{equation}
 where $\displaystyle
\binom{n-1}{2}=0$ if $n=2$.
\end{theorem}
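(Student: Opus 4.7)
The plan is to apply the extactic polynomial technique introduced before Proposition \ref{p1} to the natural $n$-dimensional subspace $W=\mathrm{span}_{\C}\{x_1,\ldots,x_n\}$ of $\C[x_1,\ldots,x_{n+1}]$, whose non-zero elements parametrize exactly the hyperplanes of $\R^{n+1}$ containing the $x_{n+1}$-axis. Thus every invariant meridian of $\X$ has the form $\{\ell=0\}\cap\mathbb{E}^n$ with $\ell\in W$, satisfying $\X\ell=K\ell+\alpha M$ for polynomials $K,\alpha$, where $M=\sum_{i=1}^{n+1}x_i^2/a_i^2-1$ is the defining polynomial of $\mathbb{E}^n$. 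The goal is to prove that each such $\ell$ divides $\mathcal{E}_W(\X)$ modulo $M$ and that $\deg\mathcal{E}_W(\X)$ realises the upper bound in \eqref{mer}.

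Taking $v_i=x_i$, the bound $\deg(\X f)\leqslant\deg f+m_1-1$ together with $\deg P_i=m_i$ yields by induction $\deg\X^{j-1}(x_i)\leqslant m_i+(j-2)(m_1-1)$ for $j\geqslant 2$, while $\deg x_i=1$. Expanding the $n\times n$ determinant $\mathcal{E}_W(\X)=\det(\X^{j-1}(x_i))$ as a signed sum over permutations $\sigma$ of $\{1,\ldots,n\}$, each term has degree at most $1+\sum_{j=2}^n\bigl(m_{\sigma(j)}+(j-2)(m_1-1)\bigr)$. Choosing $\sigma(1)=n$ maximises both $\sum_{j=2}^n m_{\sigma(j)}=\sum_{i=1}^{n-1}m_i$ and $\sum_{j=2}^n(j-2)=\binom{n-1}{2}$, giving
\[
\deg\mathcal{E}_W(\X)\leqslant 1+\sum_{i=1}^{n-1}m_i+\binom{n-1}{2}(m_1-1),
\]
which is precisely the bound in \eqref{mer}.

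For the $\mathbb{E}^n$-analog of Proposition \ref{p1}, the tangency condition $(P_1,\ldots,P_{n+1})\cdot\nabla M=0$ on $\mathbb{E}^n$ implies $\X M=\lambda M$ for some polynomial $\lambda$, so the relation $\X\ell\equiv K\ell\pmod M$ iterates by induction to $\X^j\ell\equiv K_j\ell\pmod M$ for polynomials $K_j$. Changing basis in $W$ so that $\ell$ is the first basis vector, the corresponding column of the extactic matrix is $\ell$-divisible modulo $M$, so expanding along it gives $\mathcal{E}_W(\X)\equiv\ell\cdot h\pmod M$ for some polynomial $h$. Next, each class in $\C[x_1,\ldots,x_{n+1}]/(M)$ has a unique representative of the form $p(x_1,\ldots,x_n)+q(x_1,\ldots,x_n)\,x_{n+1}$, obtained via the degree-preserving substitution $x_{n+1}^2\equiv a_{n+1}^2\bigl(1-\sum_{i=1}^n x_i^2/a_i^2\bigr)\pmod M$. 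Writing $\mathcal{E}_W(\X)\equiv e_0+e_1 x_{n+1}\pmod M$ with $e_0,e_1\in\C[x_1,\ldots,x_n]$ of degree at most $\deg\mathcal{E}_W(\X)$, and since each $\ell\in W$ already sits in $\C[x_1,\ldots,x_n]$, uniqueness of this canonical form forces $\ell\mid e_0$ and $\ell\mid e_1$ in the polynomial ring $\C[x_1,\ldots,x_n]$.

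Finiteness of the set of invariant meridians rules out $\mathcal{E}_W(\X)\equiv 0\pmod M$, so at least one of $e_0,e_1$ is non-zero; since $\C[x_1,\ldots,x_n]$ is a unique factorization domain and distinct invariant meridians $\ell_1,\ldots,\ell_k$ are pairwise non-proportional irreducible linear forms, the product $\prod_{i=1}^k\ell_i$ divides that non-zero $e_j$ in $\C[x_1,\ldots,x_n]$, yielding
\[
k\leqslant\deg\prod_{i=1}^k\ell_i\leqslant\deg e_j\leqslant\binom{n-1}{2}(m_1-1)+\sum_{i=1}^{n-1}m_i+1.
\]
The main obstacle lies in the $\mathbb{E}^n$-adaptation of Proposition \ref{p1}: the tangency $\X M=\lambda M$ must be carried through the iterated derivatives of $\ell$ so that the $\ell$-divisibility of the first column of the extactic matrix survives modulo $M$. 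Once this is secured, uniqueness of the canonical representative converts the algebraic divisibility modulo $M$ into honest polynomial divisibility in $\C[x_1,\ldots,x_n]$, and the UFD structure closes the count.
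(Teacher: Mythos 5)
Your argument is correct and is essentially the argument the paper itself invokes: the paper's proof is just a citation to Theorem 6 of \cite{LM1}, whose method is exactly your extactic-polynomial computation on $W=\mathrm{span}_{\C}\{x_1,\ldots,x_n\}$ with the degree count over permutations and the divisibility carried out modulo $M$. The one step you assert rather than prove --- that finitely many invariant meridians forces $\mathcal{E}_W(\X)\not\equiv 0\pmod M$ --- is precisely the hypersurface version of the Christopher--Llibre--Pereira lemma established in the cited reference, so flagging it as the remaining obstacle is appropriate and does not constitute a defect relative to the paper's own treatment.
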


\begin{proof}
The same as the proof of Theorem 6 in \cite{LM1}.
\end{proof}

In the next theorem we provide the maximum number of invariant
parallels that a polynomial vector field in $\mathbb{E}^n$ can have in
function of its degree.

\begin{theorem}\label{t5}
Let $\mathcal{X}$ be a polynomial vector field on $\mathbb{E}^n$ of degree
${\bf m}=(m_1,\ldots,m_{n+1})$ with $m_1\geqslant m_2\geqslant
\ldots\geqslant m_{n+1}.$ Assume that $\mathcal{X}$ has finitely
many invariant parallels. Then the number of invariant parallels of
$\mathcal{X}$ is at most $m_{n+1}$.
\end{theorem}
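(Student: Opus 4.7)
The plan is to translate the algebraic invariance condition for a parallel into the pointwise vanishing of the $(n+1)$-st component $P_{n+1}$ of $\mathcal{X},$ restrict $P_{n+1}$ to a single carefully chosen meridian curve of $\mathbb{E}^n,$ and then apply the classical zero count for trigonometric polynomials.

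First I would observe that, directly from the cofactor definition of an invariant algebraic hypersurface on $\mathbb{E}^n,$ the parallel $\{x_{n+1}=c\}\cap\mathbb{E}^n$ is invariant by $\mathcal{X}$ if and only if $P_{n+1}$ vanishes along it. From this I would make the preliminary reduction that $P_{n+1}$ cannot be identically zero on $\mathbb{E}^n,$ for otherwise $x_{n+1}$ would be a first integral of $\mathcal{X}$ and every parallel would be invariant, contradicting the finiteness hypothesis.

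Next I would parametrize a one-parameter family of meridian curves on $\mathbb{E}^n$ by
\[
\gamma(\theta)=(\alpha_1\cos\theta,\ldots,\alpha_n\cos\theta,a_{n+1}\sin\theta),\qquad \theta\in[0,2\pi),
\]
where $(\alpha_1,\ldots,\alpha_n)$ satisfies $\sum_{i=1}^n\alpha_i^2/a_i^2=1.$ As $(\alpha_1,\ldots,\alpha_n)$ varies these curves cover $\mathbb{E}^n,$ so from the preliminary reduction there is at least one choice for which $P_{n+1}\circ\gamma$ is not the zero function. Since each coordinate of $\gamma(\theta)$ is a constant multiple of $\cos\theta$ or of $\sin\theta,$ the composition $P_{n+1}(\gamma(\theta))$ is a trigonometric polynomial in $\theta$ of degree at most $m_{n+1}=\deg P_{n+1}.$ For $k$ pairwise distinct heights $c_1,\ldots,c_k$ of invariant parallels (all with $|c_j|<a_{n+1},$ since the poles $(0,\ldots,0,\pm a_{n+1})$ violate the transversality requirement in the paper's definition), the parallel at height $c_j$ meets $\gamma$ at the two values of $\theta$ where $\sin\theta=c_j/a_{n+1},$ and these $2k$ angles are pairwise distinct in $[0,2\pi).$ Applying the classical bound that a nonzero trigonometric polynomial of degree at most $d$ has at most $2d$ zeros in $[0,2\pi)$ then forces $2k\leq 2m_{n+1},$ i.e., $k\leq m_{n+1}.$

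The only step demanding real care, and hence the main obstacle, is ensuring that the covering family actually contains a meridian on which $P_{n+1}$ does not restrict to the zero function. I would handle it through the covering property above: every non-pole point of $\mathbb{E}^n$ lies on exactly one such $\gamma,$ so if $P_{n+1}\circ\gamma\equiv 0$ for every admissible $(\alpha_1,\ldots,\alpha_n),$ then $P_{n+1}$ would vanish on a dense subset, and hence on all of $\mathbb{E}^n,$ contradicting the preliminary reduction. Once this is secured, everything else is routine degree bookkeeping and the standard zero count for trigonometric polynomials.
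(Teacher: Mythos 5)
Your argument is correct. The opening reduction is the right one: for $f=x_{n+1}-c$ one has $\mathcal{X}f=P_{n+1}$, so invariance of the parallel forces $P_{n+1}$ to vanish on $\{x_{n+1}=c\}\cap\mathbb{E}^n$ (in the paper's own language, $P_{n+1}$ is the extactic polynomial $\mathcal{E}_{\langle 1,x_{n+1}\rangle}(\mathcal{X})$); your exclusion of the poles via the transversality clause (ii) is the correct reading of the definition; and the finishing count is sound, since $P_{n+1}\circ\gamma$ is a trigonometric polynomial of degree at most $m_{n+1}=\deg P_{n+1}$, each of the $k$ invariant parallels contributes two distinct zeros on $\gamma$, and a $\gamma$ with $P_{n+1}\circ\gamma\not\equiv 0$ exists because the family of elliptic sections covers $\mathbb{E}^n$ while the finiteness hypothesis rules out $P_{n+1}$ vanishing identically on $\mathbb{E}^n$. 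For comparison: the paper supplies no argument at all here, delegating verbatim to the proof of Theorem 7 of \cite{LM1} on $\mathbb{S}^n$, which rests on the same reduction to $P_{n+1}$ and its degree $m_{n+1}$. Your restriction-to-a-curve device is worth noting because it cleanly handles the one genuinely delicate point of the naive factor count: since invariance is only required \emph{on} $\mathbb{E}^n$, one only knows that $P_{n+1}$ lies in the ideal generated by $x_{n+1}-c$ and $\sum_i x_i^2/a_i^2-1$, not that $x_{n+1}-c$ divides $P_{n+1}$ in $\mathbb{R}[x_1,\ldots,x_{n+1}]$, so counting linear factors of $P_{n+1}$ directly would need an extra step, whereas counting zeros of the trigonometric polynomial does not.
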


\begin{proof}
The same as the proof of Theorem 7 in \cite{LM1}.
\end{proof}

The contribution of our next result is twofold. On one hand we prove that a $\mathbb{E}^2-$invariant complex polynomial vector field $\X$
of degree $m=(m_1,m_2)$ with $m_1\geqslant m_2,$ can reach the maximum upper bound $m_1+1$ for the
invariant meridians in Theorem \ref{t4}. On the other hand, we show the parallelism with complex polynomial vector fields $\X$ of degree $m=(m_1,m_2)$ in the complex plane $\C^2$ which can have at most $3m_1-1$ invariant straight lines, and this bound
is reached, while if $\X$ is in $\R^2$ we only know that this bound is
in the interval $[2m_1+1,3m_1-1]$ if $m_1$ is even and
$[2m_1+2,3m_1-1]$ if $m_1$ is odd, as discussed in \cite{AGL,
CLP}.
Concretely, in statement $(b)$ of the next result we show that the maximum upper bound for invariant meridians stated by Theorem \ref{t4} cannot be reached for real polynomial quadratic vector fields $\X.$ At most $m_1$ $\X-$invariant meridians can be reached for these vector fields. In fact this is also true for cubic $\X-$invariant polynomial vector fields. We do not show these extensive computations because of their significant length.

In the following proposition we provide a complex polynomial vector
field $\X$ of degree $(2,2,2)$ on the complex ellipsoid $\mathbb{E}^2$ where
the upper bound $m_1+1=3$ for the maximum number of invariant
meridians is reached, and we prove that the real polynomial vector
field $\X$ of degree $m=(2,2,2)$ on the real ellipsoid $\mathbb{E}^2$
cannot have $m_1+1=3$ invariant meridians. Additionally we provide
real polynomial vector fields $\X$ of degree $(2,2,2)$ on the real
ellipsoid $\mathbb{E}^2$ with $m_1=2$ invariant meridians.

\begin{proposition}\label{p9}
The following statements hold.
\begin{itemize}
\item[(a)] The complex polynomial differential system
\begin{equation}\label{pp0}
\begin{array}{rl}
\dot x&= y ((i a k_2 x)/b + a_{020} y), \vspace{0.2cm}\\
\dot y&= -((b x (i a k_2 x + a_{020} b y))/a^2),\vspace{0.2cm}\\
\dot z&= 1/2 k_{001} (c^2 (-1 + x^2/a^2 + y^2/b^2) + z^2),
\end{array}
\end{equation}
on the complex ellipsoid $\mathbb{E}^2$ of degree $(2,2,2)$ has the maximum
number of complex invariant meridians $m+1=3$, namely $-i b x + a y=0$, $i b x + a y=0$ and $ia k_2 x +a_{020} b y=0.$

\item[(b)] There are no real polynomial differential systems on the
real ellipsoid $\mathbb{E}^2$ of degree $(2,2,2)$ realizing the maximum
number of real invariant meridians $m_1+1=3$. Such polynomial
differential systems have at most $2$ real invariant meridians, and
this upper bound is reached.
\end{itemize}
\end{proposition}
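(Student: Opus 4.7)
My plan is to handle the two statements separately: $(a)$ by direct verification of invariance, and $(b)$ by combining a reduction-to-the-sphere argument with an explicit realizing example.

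For $(a)$, I would verify directly that each of the three linear forms $f_1=-ibx+ay$, $f_2=ibx+ay$ and $f_3=iak_2x+a_{020}by$ is an invariant algebraic hypersurface of the system \eqref{pp0}. Since each $f_i$ depends only on $x$ and $y$, the computation $\X f_i = f_{i,x}\dot x + f_{i,y}\dot y$ involves only the first two components of \eqref{pp0}, which are themselves polynomials of degree $2$ in $x,y$ and free of $z$; hence the invariance identity $\X f_i = K_i f_i$ reduces to a polynomial identity in $\C[x,y]$ and no appeal to the ellipsoid constraint is required. A direct matching of the coefficients of $x^2$, $xy$ and $y^2$ yields explicit degree-$1$ cofactors, for instance $K_1=k_2 x - \frac{iba_{020}}{a}y$, with $K_2$ the complex conjugate of $K_1$, and $K_3 = -\frac{b^2 a_{020}}{a^2}x + \frac{iak_2}{b}y$. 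Since the three linear forms are pairwise non-proportional for generic parameter values (the only degeneracies being $a^2k_2=\pm b^2 a_{020}$, which would align $f_3$ with $f_1$ or $f_2$), they cut out three distinct invariant meridians of $\mathbb{E}^2$, matching the upper bound $m_1+1=3$ given by Theorem \ref{t4}.

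For the impossibility part of $(b)$, the cleanest route is through the affine diffeomorphism $(x,y,z)=(a\xi,b\eta,c\zeta)$, which sends $\mathbb{E}^2$ bijectively onto the unit sphere $\sss^2$, preserves the degree of any polynomial vector field, and carries a real meridian $\alpha x + \beta y=0$ of $\mathbb{E}^2$ to the real meridian $(a\alpha)\xi + (b\beta)\eta=0$ of $\sss^2$. Through this transfer, the maximum number of real invariant meridians of a real polynomial vector field of degree $(2,2,2)$ on $\mathbb{E}^2$ coincides with that on $\sss^2$, which was shown in \cite{LM1} to be $2$. To see that the bound $2$ is reached, I would exhibit the explicit real polynomial vector field
\[
\dot x = xz, \qquad \dot y = -yz, \qquad \dot z = \tfrac{c^2}{b^2}y^2 - \tfrac{c^2}{a^2}x^2
\]
of degree $(2,2,2)$, which is identically tangent to $\mathbb{E}^2$ since $xz\cdot x/a^2+(-yz)\cdot y/b^2+(\tfrac{c^2}{b^2}y^2-\tfrac{c^2}{a^2}x^2)\cdot z/c^2$ vanishes as a polynomial. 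The computation $\X(\alpha x+\beta y)=z(\alpha x-\beta y)$, together with the requirement that this expression be of the form $K(\alpha x+\beta y)+\mu(x^2/a^2+y^2/b^2+z^2/c^2-1)$ for some degree-$1$ polynomial $K$ and constant $\mu$, forces $\alpha\beta = 0$ by matching the coefficients of $xz$ and $yz$; consequently the only invariant meridians are $\{x=0\}$ and $\{y=0\}$.

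The main obstacle is the impossibility claim in $(b)$: a fully self-contained proof without invoking \cite{LM1} would require parametrizing the general real polynomial vector field of degree $(2,2,2)$ tangent to $\mathbb{E}^2$ and propagating the three hypothetical invariance identities through all the free coefficients to force a contradiction, a computation that the transfer to $\sss^2$ circumvents entirely by recasting the claim as a previously established fact about the sphere.
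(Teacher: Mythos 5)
Your proof is correct. For statement $(a)$ you do exactly what the paper does---verify $\X f_i=K_if_i$ for the three linear forms by direct computation---and your cofactors check out (your $K_3=-\tfrac{a_{020}b^2}{a^2}x+\tfrac{ia k_2}{b}y$ in fact supplies a factor of $i$ that is missing from the third line of \eqref{pp1}). For statement $(b)$, however, you take a genuinely different route. The paper argues by exhaustion: it writes the general real system of degree $(2,2,2)$ tangent to $\mathbb{E}^2$ as \eqref{pp2}, imposes invariance of $\alpha x+\beta y=0$, and examines the resulting $18$ families one by one, reading off from each extactic polynomial that no family carries three real invariant meridians while several carry exactly two. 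You instead note that the diagonal rescaling $(x,y,z)=(a\xi,b\eta,c\zeta)$ is a degree-preserving polynomial isomorphism taking $\mathbb{E}^2$ to $\sss^2$, tangent fields to tangent fields, and meridian planes to meridian planes, so the maximal number of real invariant meridians in degree $(2,2,2)$ is the same on both surfaces and the impossibility claim becomes the corresponding proposition of \cite{LM1}; your example $\dot x=xz$, $\dot y=-yz$, $\dot z=c^2y^2/b^2-c^2x^2/a^2$ then realizes the bound $2$ (your coefficient matching is legitimate because any relation $\X f=Kf+hM$ on the irreducible quadric $M=0$ can be normalized so that $\deg K\le 1$ and $h$ is constant). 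Your argument is far shorter and more conceptual, and it exposes the fact that the ellipsoid statement is not logically independent of the sphere statement; what it trades away is self-containedness---the entire computational content is outsourced to \cite{LM1}, where the sphere case is settled by essentially the same case analysis reproduced here---and it does not produce the explicit normal forms of quadratic systems on $\mathbb{E}^2$ with two invariant meridians that the paper obtains as a by-product of its $18$ cases.
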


\begin{proof}
Consider the vector field $\X$ associated to the complex polynomial
differential system of statement $(a)$. Then
\[
\X( x^2/a^2 + y^2/b^2 + z^2/c^2-1)= k_{001} z ( x^2/a^2 + y^2/b^2 + z^2/c^2-1),
\]
so $\X$ is a polynomial vector field on the complex ellipsoid $\mathbb{E}^2$.
The three complex planes $-i b x + a y=0$, $i b x + a y=0$ and $ia k_2 x +a_{020} b y=0$ are
invariant because
\begin{equation}\label{pp1}
\begin{array}{l}
\X(-i b x + a y)= ((-i b x + a y) (a k_2 x - i a_{020} b y))/a, \vspace{0.2cm}\\
\X(i b x + a y)= -(((i b x + a y) (a k_2 x - i a_{020} b y))/a), \vspace{0.2cm}\\
\X(ia k_2 x +a_{020} b y)= ((i a k_2 x + a_{020} b y) (i a_{020} b^3 x + a^3 k_2 y))/(a^2 b).
\end{array}
\end{equation}
This completes the proof of statement $(a)$.

Now we shall prove statement $(b)$. Consider a general real
polynomial differential system of degree $(2,2,2)$ in $\R^3$, i.e.
\begin{equation}\label{eqsum}
\dot x= \sum_{i+j+k=0}^2 a_{ijk} x^i y^j z^k, \quad \dot y=
\sum_{i+j+k=0}^2 b_{ijk} x^i y^j z^k, \quad \dot z= \sum_{i+j+k=0}^2
c_{ijk} x^i y^j z^k.
\end{equation}
Imposing that $\X(x^2/a^2 + y^2/b^2 + z^2/c^2 - 1)= (k_{000}+k_{100} x+k_{010}
y+k_{001}z)(x^2/a^2 + y^2/b^2 + z^2/c^2 - 1)$ system \eqref{eqsum} becomes
\begin{equation}\label{pp2}
\begin{array}{rl}
\dot x=&-a^2 k_{100}/2 + k_{100} x^2/2 + a_{020} y^2 + a_{001} z + a_{002} z^2 + 
 y (a_{010}\\& + a_{011} z) + x (a_{110} y + a_{101} z), \vspace{0.2cm}\\
\dot y=&-b^2 k_{010}/2 + (-2 a_{110} b^2 + b^2 k_{010}) x^2/(2 a^2) + 
 k_{010} y^2/2 \\& + b_{001} z + b_{011} y z + b_{002} z^2 + 
 x (-a_{010} b^2/a^2\\&  + ((-2 a_{020} b^2 + a^2 k_{100}) y)/(2 a^2) + b_{101} z), \vspace{0.2cm}\\
\dot z=&-c^2 k_{001}/2 + (-2 a_{101} c^2 + c^2 k_{001}) x^2/(
 2 a^2) + (-2 b_{011} c^2\\& + c^2 k_{001})  y^2/(2 b^2) + k_{001} z^2/2 + 
 y (-b_{001} c^2/b^2  + (-2 b_{002} c^2\\& + b^2 k_{010}) z/(2 b^2)) + 
 x (-a_{001} c^2/a^2 + (-a_{011} b^2 c^2 - a^2 b_{101} c^2)\\& y/(
    a^2 b^2) + (-2 a_{002} c^2  + a^2 k_{100}) z/(2 a^2)).
\end{array}
\end{equation}

Now we impose that the plane $\alpha x+\beta y=0$ be invariant by the
polynomial differential system \eqref{pp2}, i.e. $\X(\alpha x+\beta y)= (k_1+k_2
x+k_3 y+k_4z)(\alpha x+\beta y)$ and we obtain $18$ polynomial
differential systems \eqref{pp2} having invariant planes of this
kind.

\begin{enumerate}
    \item The polynomial differential system
\begin{equation*}\label{pp3}
\begin{array}{rl}
\dot x=&k_{100} x^2/2 + 
 z (a_{001} + a_{101} x + a_{002} z + (-b_{011} + k_4) y \beta/\alpha) - (
 a^2 (k_{100} (b - y) (b + y) \beta\\& + 
    2 k_2 y (x \alpha + y \beta)))/(2 b^2 \beta), \vspace{0.2cm}\\
\dot y=&(a^2 k_{100} (b - y) (b + y) \alpha + 
 b^2 (-k_{100} x^2 \alpha - 
    2 z (a_{001} + a_{101} x - k_4 x+ a_{002} z) \alpha + 
    2 b_{011} y z \beta\\&  + 
    2 k_2 x (x \alpha + y \beta)))/(2 b^2 \beta), \vspace{0.2cm}\\
\dot z=&(1/(2 a^2 b^2 \alpha \beta))(-a^4 k_{100} y z \alpha^2 + 
  b^2 c^2 x \beta ((-2 a_{001} - 2 a_{101} x + k_{001} x - 
        2 a_{002} z) \alpha + 2 (b_{011}\\& - k_4) y \beta) + 
  a^2 \alpha (2 c^2 y (a_{001} + a_{101} x - k_4 x + a_{002} z) \alpha + 
     c^2 (-2 b_{011} + k_{001}) y^2 \beta\\& + 
     b^2 (-c^2 k_{001} + z (k_{100} x + k_{001} z)) \beta)),
\end{array}
\end{equation*}
having the invariant plane $\alpha x+\beta y=0$, because its
extactic polynomial is
\begin{equation*}\label{pp4}
\begin{array}{l}
-(((x \alpha + 
    y \beta) (a^2 (-b^2 k_{100} + (k_{100} - 2 k_2) y^2) \alpha + 
    b^2 (k_{100} x^2 \alpha - 2 k_2 x^2 \alpha\\\hspace{3cm}+ 
       2 z ((a_{001} + a_{101} x - k_4 x + a_{002} z) \alpha + (-b_{011} + 
             k_4) y \beta))))/(2 b^2 \alpha \beta)).
\end{array}
\end{equation*}

\item The polynomial differential system
\begin{equation*}\label{pp5}
\begin{array}{rl}
\dot x=&(1/2) (k_{100} x^2 + (
   2 z (a_{101} x \alpha + 
      a_{002} z \alpha + (-b_{011} + k_{4}) y \beta))/\alpha - (
   a^2 (k_{100} (b - y) (b + y) \beta\\& + 
      2 k_{2} y (x \alpha + y \beta)))/(b^2 \beta)), \vspace{0.2cm}\\
\dot y=&(a^2 k_{100} (b - y) (b + y) \alpha + 
 b^2 (-((k_{100} - 2 k_{2}) x^2 + 2 (a_{101} - k_{4}) x z + 
         2 a_{002} z^2) \alpha\\& + 
    2 y (k_{2} x + b_{011} z) \beta))/(2 b^2 \beta), \vspace{0.2cm}\\
\dot z=&(1/(2 a^2 b^2 \alpha \beta))(-a^4 k_{100} y z \alpha^2 + 
  b^2 c^2 x \beta ((-2 a_{101} x + k_{001} x - 2 a_{002} z) \alpha + 
     2 (b_{011} \\&- k_{4}) y \beta) + 
  a^2 \alpha (2 c^2 y (a_{101} x - k_{4} x + a_{002} z) \alpha + 
     c^2 (-2 b_{011} + k_{001}) y^2 \beta + 
     b^2 (-c^2 k_{001}\\& + z (k_{100} x + k_{001} z)) \beta)),
\end{array}
\end{equation*}
having the invariant planes $\alpha x+\beta y=0$ and $y=0$, because after setting $k_{100} = 0, a_{101} = k_{4}, a_{002} = 0, k_{2} = 0,$ its
extactic polynomial is
\begin{equation*}\label{pp6}
\begin{array}{l}
((b_{011} - k_{4}) y z (x \alpha + y \beta))/\alpha.
\end{array}
\end{equation*}

\item The polynomial differential system
\begin{equation*}\label{pp7}
\begin{array}{rl}
\dot x=&(1/2) (k_{100} x^2 + a^2 k_{100} (-1 + y^2/b^2) + 
   2 z (a_{001} + a_{101} x + a_{002} z) + (
   2 (-b_{011}\\& + k_4) y z \beta)/\alpha), \vspace{0.2cm}\\
\dot y=&(a^2 k_{100} (b - y) (b + y) \alpha + 
 b^2 (-k_{100} x^2 \alpha - 
    2 z (a_{001} + a_{101} x - k_4 x + a_{002} z) \alpha\\& + 
    2 b_{011} y z \beta))/(2 b^2 \beta), \vspace{0.2cm}\\
\dot z=&(1/(2 a^2 b^2 \alpha \beta))(-a^4 k_{100} y z \alpha^2 + 
  b^2 c^2 x \beta ((-2 a_{001} - 2 a_{101} x + k_{001} x - 
        2 a_{002} z) \alpha\\& + 2 (b_{011} - k_4) y \beta) + 
  a^2 \alpha (2 c^2 y (a_{001} + a_{101} x - k_4 x + a_{002} z) \alpha + 
     c^2 (-2 b_{011} \\&+ k_{001}) y^2 \beta + 
     b^2 (-c^2 k_{001} + z (k_{100} x + k_{001} z)) \beta)),
\end{array}
\end{equation*}
having the invariant planes $\alpha x+\beta y=0$ and $y=0$, because after setting $k_{100} = 0, a_{101} = k_{4}, a_{002} = 0, a_{001} = 0,$ its
extactic polynomial is
\begin{equation*}\label{pp61}
\begin{array}{l}
((b_{011} - k_{4}) y z (x \alpha + y \beta))/\alpha.
\end{array}
\end{equation*}

\item The polynomial differential system
\begin{equation*}\label{pp9}
\begin{array}{rl}
\dot x=&k_{100} x^2/2 - a^2 k_{100} (x y \alpha + b^2 \beta)/(
 2 b^2 \beta) + 
 z (a_{001} + a_{101} x + a_{002} z\\& + (-b_{011} + k_4) y \beta/\alpha), \vspace{0.2cm}\\
\dot y=&(a^2 k_{100} (b - y) (b + y) \alpha + 
 b^2 (-2 z (a_{001} + a_{101} x - k_4 x + a_{002} z) \alpha + 
    y (k_{100} x\\&  + 2 b_{011} z) \beta))/(2 b^2 \beta), \vspace{0.2cm}\\
\dot z=&(1/(2 a^2 b^2 \alpha \beta))(-a^4 k_{100} y z \alpha^2 + 
  b^2 c^2 x \beta ((-2 a_{001} - 2 a_{101} x + k_{001} x - 
        2 a_{002} z) \alpha\\&  + 2 (b_{011} - k_4) y \beta) + 
  a^2 \alpha (2 c^2 y (a_{001} + a_{101} x - k_4 x + a_{002} z) \alpha + 
     c^2 (-2 b_{011} \\& + k_{001}) y^2 \beta + 
     b^2 (-c^2 k_{001} + z (k_{100} x + k_{001} z)) \beta)),
\end{array}
\end{equation*}
having the invariant planes $\alpha x+\beta y=0$ and $x=0$, because after setting $k_{100} = 0, b_{011} = k_{4}, a_{002} = 0, a_{001} = 0,$ its
extactic polynomial is
\begin{equation*}\label{ppx3}
\begin{array}{l}
-((a_{101} - k_{4}) x z (x \alpha + y \beta))/\beta.
\end{array}
\end{equation*}

\item The polynomial differential system
\begin{equation*}\label{pp11}
\begin{array}{rl}
\dot x=&-(a^2 k_2 y (x \alpha + y \beta)/(b^2 \beta)) + 
 z (a_{001} + a_{101} x + a_{002} z + (-b_{011} + k_4) y \beta/\alpha), \vspace{0.2cm}\\
\dot y=&(-z (a_{001} + a_{101} x - k_4 x + a_{002} z) \alpha + b_{011} y z \beta + 
 k_2 x (x \alpha + y \beta))/\beta, \vspace{0.2cm}\\
\dot z=&(1/2) (k_{001} z^2 + 
   c^2 (-k_{001} + (-2 b_{011} + k_{001}) y^2/b^2 + (
      x (-2 a_{001} - 2 a_{101} x + k_{001} x \\&- 2 a_{002} z))/a^2 + 
      2 y (a_{001} + a_{101} x - k_4 x + a_{002} z) \alpha/(b^2 \beta) + 
      2 (b_{011} - k_4) x y \beta/(a^2 \alpha))),
\end{array}
\end{equation*}
having the invariant planes $\alpha x+\beta y=0$ and $y=0$, because after setting $k_{2} = 0, a_{101} = k_{4}, a_{002} = 0, a_{001} = 0,$ its
extactic polynomial is
\begin{equation*}\label{ppx1}
\begin{array}{l}
((b_{011} - k_{4}) y z (x \alpha + y \beta))/\alpha.
\end{array}
\end{equation*}

\item The polynomial differential system
\begin{equation*}\label{pp13}
\begin{array}{rl}
\dot x=&(1/2) (k_{100} x^2 + a^2 (-k_{100} + (k_{100} - 2 k_2) y^2/b^2 + 
   2 z (a_{001} + a_{101} x + a_{011} y + a_{002} z)), \vspace{0.2cm}\\
\dot y=&y (k_2 x + k_4 z), \vspace{0.2cm}\\
\dot z=&(1/2) (z (k_{100} x + k_{001} z) + 
   c^2 (-k_{001} + (k_{001} - 2 k_4) y^2/b^2 - (
      x (2 a_{001} + 2 a_{101} x\\& - k_{001} x + 2 a_{011} y + 2 a_{002} z))/a^2)),
\end{array}
\end{equation*}
with the invariant plane $y=0$ of multiplicity $2,$ because after setting $k_{100} = 0, k_2=0, a_{101} = k_{4}, a_{002} = 0, a_{001} = 0,$ its
extactic polynomial is
\begin{equation*}\label{ppy2}
\begin{array}{l}
-a_{011} y^2z.
\end{array}
\end{equation*}

\item The polynomial differential system
\begin{equation*}\label{pp15}
\begin{array}{rl}
\dot x=&x (a_{110} y + k_4 z), \vspace{0.2cm}\\
\dot y=&(1/2) (b^2 (-k_{010} + (-2 a_{110} + k_{010}) x^2/a^2) + k_{010} y^2 + 
   2 z (b_{001} + b_{101} x\\& + b_{011} y + b_{002} z)), \vspace{0.2cm}\\
\dot z=&(1/2) (z (k_{010} y + k_{001} z) + 
   c^2 (k_{001} - 2 k_4) x^2/a^2 + k_{001} (-1 + y^2/b^2) - 
      2 y (b_{001}\\&  + b_{101} x + b_{011} y + b_{002} z)/b^2),
\end{array}
\end{equation*}
having the invariant planes $x=0$ and $y=0$, because after setting $k_{010} = 0, b_{001} =0, k_{4}=0, b_{002} = 0, a_{110} = 0, b_{101}=0$ its
extactic polynomial is
\begin{equation*}\label{ppu3}
\begin{array}{l}
b_{011}xyz.
\end{array}
\end{equation*}

\item The polynomial differential system
\begin{equation*}\label{pp17}
\begin{array}{rl}
\dot x=&k_{100} x^2/2 + a_{101} x z + (-b_{011} + k_4) y z \beta/\alpha - (
 a^2 (k_{100} (b - y) (b + y) \beta\\& + 
    2 k_2 y (x \alpha + y \beta)))/(2 b^2 \beta), \vspace{0.2cm}\\
\dot y=&(a^2 k_{100} (b - y) (b + y) \alpha + 
 b^2 (x (-k_{100} x + 2 k_2 x - 2 a_{101} z + 2 k_4 z) \alpha\\& + 
    2 y (k_2 x + b_{011} z) \beta))/(2 b^2 \beta), \vspace{0.2cm}\\
\dot z=&(-a^4 k_{100} y z \alpha^2 + 
 b^2 c^2 x \beta ((-2 a_{101} + k_{001}) x \alpha + 
    2 (b_{011} - k_4) y \beta)\\& + 
 a^2 \alpha (2 c^2 (a_{101} - k_4) x y \alpha + 
    c^2 (-2 b_{011} + k_{001}) y^2 \beta + 
    b^2 (-c^2 k_{001}\\& + 
       z (k_{100} x + k_{001} z)) \beta))/(2 a^2 b^2 \alpha \beta),
\end{array}
\end{equation*}
having the invariant planes $\alpha x+\beta y=0$ and $y=0$, because after setting $k_{4} = 0, k_{100} =0, a_{101}=0, k_{2} = 0,$ its
extactic polynomial is
\begin{equation*}\label{ppu2}
\begin{array}{l}
b_{011}yz(\alpha x+\beta y)/\alpha.
\end{array}
\end{equation*}

\item The polynomial differential system
\begin{equation*}\label{pp19}
\begin{array}{rl}
\dot x=&(1/2) (k_{100} x^2 + a^2 k_{100} (-1 + y^2/b^2) + (
   2 z (a_{101} x \alpha + 
      a_{002} z \alpha + (-b_{011}\\& + k_4) y \beta))/\alpha), \vspace{0.2cm}\\
\dot y=&(a^2 k_{100} (b - y) (b + y) \alpha - 
 b^2 (k_{100} x^2 \alpha + 2 z (a_{101} x - k_4 x + a_{002} z) \alpha\\&  - 
    2 b_{011} y z \beta))/(2 b^2 \beta), \vspace{0.2cm}\\
\dot z=&(1/(2 a^2 b^2 \alpha \beta))(-a^4 k_{100} y z \alpha^2 + 
  b^2 c^2 x \beta ((-2 a_{101} x + k_{001} x - 2 a_{002} z) \alpha\\&  + 
     2 (b_{011} - k_4) y \beta) + 
  a^2 \alpha (2 c^2 y (a_{101} x - k_4 x + a_{002} z) \alpha + 
     c^2 (-2 b_{011} \\& + k_{001}) y^2 \beta + 
     b^2 (-c^2 k_{001} + z (k_{100} x + k_{001} z)) \beta)),
\end{array}
\end{equation*}
with the invariant planes $\alpha x+\beta y=0$ and $y=0$, because after setting $k_{100} = 0, a_{002} =0, a_{101}=k_4,$ the
extactic polynomial is
\begin{equation*}\label{ppu1}
\begin{array}{l}
(b_{011}-k_4)yz(\alpha x+\beta y)/\alpha.
\end{array}
\end{equation*}

\item The polynomial differential system
\begin{equation}\label{pp21}
\begin{array}{rl}
\dot x=&k_{100} x^2/2 + a_{101} x z + 
 a_{002} z^2 + (-b_{011} + k_4) y z \beta/\alpha - 
 a^2 k_{100} (x y \alpha\\& + b^2 \beta)/(2 b^2 \beta), \vspace{0.2cm}\\
\dot y=&(a^2 k_{100} (b - y) (b + y) \alpha + 
 b^2 (-2 z (a_{101} x - k_4 x + a_{002} z) \alpha + 
    y (k_{100} x\\&  + 2 b_{011} z) \beta))/(2 b^2 \beta), \vspace{0.2cm}\\
\dot z=&(1/(2 a^2 b^2 \alpha \beta))(-a^4 k_{100} y z \alpha^2 + 
  b^2 c^2 x \beta ((-2 a_{101} x + k_{001} x - 2 a_{002} z) \alpha\\&  + 
     2 (b_{011} - k_4) y \beta) + 
  a^2 \alpha (2 c^2 y (a_{101} x - k_4 x + a_{002} z) \alpha + 
     c^2 (-2 b_{011}\\&  + k_{001}) y^2 \beta + 
     b^2 (-c^2 k_{001} + z (k_{100} x + k_{001} z)) \beta)),
\end{array}
\end{equation}
If we set $k_{100} = 0, a_{101} = k_4$ and $a_{002} = 0,$ then the invariant planes are $\alpha x+\beta y=0$ and $y=0$ because the extactic polynomial is
\begin{equation*}\label{pp21_s}
\begin{array}{l}
((b_{011} - k_4) y z (x \alpha + y \beta))/\alpha.
\end{array}
\end{equation*}

\item The polynomial differential system
\begin{equation*}\label{pp23}
\begin{array}{rl}
\dot x=&a_{101} x z + a_{002} z^2 + (-b_{011} + k_4) y z \beta/\alpha - 
 a^2 k_2 y (x \alpha + y \beta)/(b^2 \beta), \vspace{0.2cm}\\
\dot y=&(k_2 x (x \alpha + y \beta) + 
 z (-a_{101} x \alpha + k_4 x \alpha - a_{002} z \alpha + 
    b_{011} y \beta))/\beta, \vspace{0.2cm}\\
\dot z=&(1/2) (k_{001} z^2 + 
   c^2 (-k_{001} + (-2 b_{011} + k_{001}) y^2/b^2 + 
      x (-2 a_{101} x + k_{001} x\\& - 2 a_{002} z)/a^2 + 
      2 y (a_{101} x - k_4 x + a_{002} z) \alpha/(b^2 \beta) + 
      2 (b_{011} - k_4) x y \beta/(a^2 \alpha))),
\end{array}
\end{equation*}
with the invariant planes $\alpha x+\beta y=0$ and $y=0$, because after setting $k_{2} = 0, a_{002} =0, a_{101}=k_4,$ the
extactic polynomial is
\begin{equation*}\label{ppu19}
\begin{array}{l}
(b_{011}-k_4)yz(\alpha x+\beta y)/\alpha.
\end{array}
\end{equation*}

\item The polynomial differential system
\begin{equation*}\label{pp25}
\begin{array}{rl}
\dot x=&z (a_{001} + a_{101} x + a_{002} z + (-b_{011} + k_4) y \beta/\alpha), \vspace{0.2cm}\\
\dot y=&b_{011} y z - z (a_{001} + a_{101} x - k_4 x + a_{002} z) \alpha/\beta, \vspace{0.2cm}\\
\dot z=&(1/2) (k_{001} z^2 + 
   c^2 (-k_{001} + (-2 b_{011} + k_{001}) y^2/b^2 + 
      x (-2 a_{001} - 2 a_{101} x + k_{001} x \\&- 2 a_{002} z)/a^2 + 
      2 y (a_{001} + a_{101} x - k_4 x + a_{002} z) \alpha/(b^2 \beta) + 
      2 (b_{011} - k_4) x y \beta/(a^2 \alpha))),
\end{array}
\end{equation*}
with the invariant planes $\alpha x+\beta y=0$ and $y=0$, because after setting $a_{001} = 0, a_{002} =0, a_{101}=k_4,$ the
extactic polynomial is
\begin{equation*}\label{ppu41}
\begin{array}{l}
(b_{011}-k_4)yz(\alpha x+\beta y)/\alpha.
\end{array}
\end{equation*}

\item The polynomial differential system
\begin{equation*}\label{pp27}
\begin{array}{rl}
\dot x=&(1/2) (a^2 k_{100} (-1 + y^2/b^2) + x (k_{100} x + 2 a_{101} z) + 
   2 (-b_{011} + k_4) y z \beta/\alpha), \vspace{0.2cm}\\
\dot y=&(a^2 k_{100} (b - y) (b + y) \alpha + 
 b^2 (-x (k_{100} x + 2 (a_{101} - k_4) z) \alpha + 
    2 b_{011} y z \beta))/(2 b^2 \beta), \vspace{0.2cm}\\
\dot z=&(-a^4 k_{100} y z \alpha^2 + 
 b^2 c^2 x \beta ((-2 a_{101} + k_{001}) x \alpha + 
    2 (b_{011} - k_4) y \beta)\\& + 
 a^2 \alpha (2 c^2 (a_{101} - k_4) x y \alpha + 
    c^2 (-2 b_{011} + k_{001}) y^2 \beta + 
    b^2 (-c^2 k_{001}\\& + 
       z (k_{100} x + k_{001} z)) \beta))/(2 a^2 b^2 \alpha \beta),
\end{array}
\end{equation*}
with the invariant planes $\alpha x+\beta y=0$ and $y=0$, because after setting $k_{100} = 0, a_{101}=k_4,$ the
extactic polynomial is
\begin{equation*}\label{ppu12}
\begin{array}{l}
(b_{011}-k_4)yz(\alpha x+\beta y)/\alpha.
\end{array}
\end{equation*}

\item The polynomial differential system
\begin{equation*}\label{pp29}
\begin{array}{rl}
\dot x=&(1/2) (x (k_{100} x + 2 a_{101} z) + 
   a^2 k_{100} (-1 - x y \alpha/(b^2\beta)) + 
   2 (-b_{011} + k_4) y z\beta/\alpha), \vspace{0.2cm}\\
\dot y=&(a^2 k_{100} (b - y) (b + y) \alpha + 
 b^2 (2 (-a_{101} + k_4) x z \alpha + 
    y (k_{100} x + 2 b_{011} z)\beta))/(2 b^2\beta), \vspace{0.2cm}\\
\dot z=&(-a^4 k_{100} y z \alpha^2 + 
 b^2 c^2 x\beta ((-2 a_{101} + k_{001}) x \alpha + 
    2 (b_{011} - k_4) y\beta) + 
 a^2 \alpha (2 c^2 (a_{101}\\& - k_4) x y \alpha + 
    c^2 (-2 b_{011} + k_{001}) y^2\beta + 
    b^2 (-c^2 k_{001} + 
       z (k_{100} x + k_{001} z))\beta))/(2 a^2 b^2 \alpha\beta),
\end{array}
\end{equation*}
with the invariant planes $\alpha x+\beta y=0$ and $y=0$, because after setting $k_{100} = 0, a_{101}=k_4,$ the
extactic polynomial is
\begin{equation*}\label{ppu124}
\begin{array}{l}
(b_{011}-k_4)yz(\alpha x+\beta y)/\alpha.
\end{array}
\end{equation*}

\item The polynomial differential system
\begin{equation*}\label{pp31}
\begin{array}{rl}
\dot x=&a_{101} x z + (-b_{011} + k_4) y z\beta/\alpha - 
 a^2 k_2 y (x \alpha + y\beta)/(b^2\beta), \vspace{0.2cm}\\
\dot y=&y (k_2 x + b_{011} z) + x (k_2 x + (-a_{101} + k_4) z) \alpha/\beta, \vspace{0.2cm}\\
\dot z=&(1/2) (k_{001} z^2 + 
   c^2 (-k_{001} + (-2 b_{011} + k_{001}) y^2/b^2 + 
      2 (a_{101} - k_4) x y \alpha/(b^2\beta)\\& + 
      x ((-2 a_{101} + k_{001}) x \alpha + 2 (b_{011} - k_4) y\beta)/(
      a^2 \alpha))),
\end{array}
\end{equation*}
with the invariant planes $\alpha x+\beta y=0$ and $y=0$, because after setting $k_{2} = 0, a_{101}=k_4,$ the
extactic polynomial is
\begin{equation*}\label{ppu1214}
\begin{array}{l}
(b_{011}-k_4)yz(\alpha x+\beta y)/\alpha.
\end{array}
\end{equation*}

\item The polynomial differential system
\begin{equation*}\label{pp33}
\begin{array}{rl}
\dot x=&z (a_{101} x \alpha + 
   a_{002} z \alpha + (-b_{011} + k_4) y\beta)/\alpha, \vspace{0.2cm}\\
\dot y=&z (-a_{101} x \alpha + k_4 x \alpha - a_{002} z \alpha + 
   b_{011} y\beta)/\beta, \vspace{0.2cm}\\
\dot z=&(1/2) (k_{001} z^2 + 
   c^2 (-k_{001} + (-2 b_{011} + k_{001}) y^2/b^2 + 
      x (-2 a_{101} x + k_{001} x\\& - 2 a_{002} z)/a^2 + 
      2 y (a_{101} x - k_4 x + a_{002} z) \alpha/(b^2\beta) + 
      2 (b_{011} - k_4) x y\beta/(a^2 \alpha))),
\end{array}
\end{equation*}
with the invariant planes $\alpha x+\beta y=0$ and $x=0$, because after setting $a_{002} = 0, b_{011}=k_4,$ the
extactic polynomial is
\begin{equation*}\label{ppu1234}
\begin{array}{l}
(a_{101}-k_4)xz(\alpha x+\beta y)/\beta.
\end{array}
\end{equation*}

\item The polynomial differential system
\begin{equation*}\label{pp35}
\begin{array}{rl}
\dot x=&a_{101} x z + (-b_{011} + k_4) y z\beta/\alpha, \vspace{0.2cm}\\
\dot y=&b_{011} y z + (-a_{101} + k_4) x z \alpha/\beta, \vspace{0.2cm}\\
\dot z=&(1/2) (k_{001} z^2 + 
   c^2 (-k_{001} + (-2 b_{011} + k_{001}) y^2/b^2 + 
      2 (a_{101} - k_4) x y \alpha/(b^2\beta)\\& + 
      x ((-2 a_{101} + k_{001}) x \alpha + 2 (b_{011} - k_4) y\beta)/(
      a^2 \alpha))),
\end{array}
\end{equation*}
with the invariant planes $\alpha x+\beta y=0$ and $x=0$, because after setting $b_{011}=k_4,$ the
extactic polynomial is
\begin{equation*}\label{ppu123456}
\begin{array}{l}
(a_{101}-k_4)xz(\alpha x+\beta y)/\beta.
\end{array}
\end{equation*}

\item The polynomial differential system
\begin{equation*}\label{pp37}
\begin{array}{rl}
\dot x=&k_4 x z + (-b_{011} + k_4) y z\beta/\alpha, \vspace{0.2cm}\\
\dot y=&b_{011} y z, \vspace{0.2cm}\\
\dot z=&(1/2) (k_{001} z^2 + 
   c^2 (-k_{001} + (-2 b_{011} + k_{001}) y^2/b^2 + 
      x ((k_{001} - 2 k_4) x \alpha \\&+ 2 (b_{011} - k_4) y\beta)/(
      a^2 \alpha))),
\end{array}
\end{equation*}
with the invariant planes $\alpha x+\beta y=0$ and $x=0$, because the
extactic polynomial is
\begin{equation*}\label{ppu12345}
\begin{array}{l}
(a_{101}-k_4)xz(\alpha x+\beta y)/\beta.
\end{array}
\end{equation*}
\end{enumerate}

In summary we have proved that quadratic polynomial differential
systems on the ellipsoid $\mathbb{E}^2$ can have at most $2$ invariant
meridians, and that there are such kind of systems having $2$
invariant meridians.
\end{proof}

In the next proposition we give a polynomial differential system on
the ellipsoid $\mathbb{E}^2$ where the upper bound for the maximum number of
invariant parallels is reached.

\begin{proposition}\label{p11}
Consider the polynomial differential system
\begin{equation}\label{ex2}
\begin{array}{rl}
\dot{x}&=a_{010} y , \vspace{0.2cm}\\
\dot{y}&=(1/2)  b^2  (-k_{010} + (x (-2 a_{010} + k_{010} x)/a^2) + k_{010}  y^2 + 
   2  b_{002}  z^2), \vspace{0.2cm}\\
\dot{z}&= (1/2) (-(2 b_{002} c^2)/b^2 + k_{010}) y z,
\end{array}
\end{equation}
on the ellipsoid $\mathbb{E}^2$ of degree $(2,2,1)$. For this system the
upper bound $m_3=1$ for the number of invariant parallels provided
in Theorem \ref{t5} is reached.
\end{proposition}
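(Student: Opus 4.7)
The plan is to verify that \eqref{ex2} is a polynomial vector field on $\mathbb{E}^2$, to exhibit one invariant parallel, to rule out any others, and then to invoke Theorem~\ref{t5}.

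First I would check tangency to $\mathbb{E}^2$. Writing $M = x^2/a^2 + y^2/b^2 + z^2/c^2 - 1$, condition \eqref{on} reads
\[
\X(M) \;=\; \frac{2x}{a^2}\dot x + \frac{2y}{b^2}\dot y + \frac{2z}{c^2}\dot z \;=\; K\cdot M
\]
for a polynomial cofactor $K$ of degree $\leqslant m_1-1=1$. The term $2a_{010}xy/a^2$ coming from $(2x/a^2)\dot x$ cancels the $-2a_{010}xy/a^2$ piece produced by $(2y/b^2)\dot y$, leaving a polynomial of degree three in which the natural ansatz is $K=k_{010}\,y$; I would confirm this by matching the coefficients of the monomials $y$, $x^2y$, $y^3$ and $yz^2$ on the two sides.

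Second I would identify the invariant parallel. Because
\[
\dot z \;=\; \tfrac{1}{2}\bigl(k_{010} - 2b_{002}c^2/b^2\bigr)\,y\,z,
\]
the plane $\{z=0\}$ is invariant under $\X$ with cofactor $\tfrac{1}{2}(k_{010}-2b_{002}c^2/b^2)\,y$, and the transversality clause in the definition of an invariant algebraic hypersurface on $\mathbb{E}^2$ is immediate since $\nabla z=(0,0,1)$ and $\nabla M$ are linearly independent at every point of the equatorial ellipse $\{z=0\}\cap\mathbb{E}^2$. So $\{z=0\}\cap\mathbb{E}^2$ is an invariant parallel of $\X$.

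Third I would exclude other invariant parallels and conclude. Assuming the generic condition $k_{010}-2b_{002}c^2/b^2\neq 0$, so that $\X$ is not tangent to every $\{z=c_0\}$, a plane $\{z=c_0\}$ is invariant under $\X$ exactly when $z-c_0$ divides $\dot z$; since $\dot z$ is a nonzero scalar multiple of $yz$ this forces $c_0=0$. Hence $\X$ has exactly one invariant parallel, the hypothesis of Theorem~\ref{t5} is fulfilled, and the bound $m_{n+1}=m_3=1$ coming from the degree $(m_1,m_2,m_3)=(2,2,1)$ of \eqref{ex2} is attained. The main obstacle lies in the first step: the semiaxes $a,b,c$ enter \eqref{ex2} asymmetrically, so checking the identity $\X(M)=k_{010}\,y\cdot M$ is where the algebraic bookkeeping sits, while the remaining two steps are immediate from the factored form of $\dot z$ and of $M|_{z=0}$.
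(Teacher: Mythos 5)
Your proposal is correct and follows essentially the same route as the paper: verify $\X(M)=k_{010}\,y\,M$ so that $\X$ lives on $\mathbb{E}^2$, then observe that $\dot z$ is a scalar multiple of $yz$, which forces $z=0$ to be the unique invariant plane of the form $z=\mathrm{constant}$. The paper phrases the last step via the extactic polynomial $\E_z$ of the subspace spanned by $\{1,z\}$, but that determinant reduces to $\X(z)=\dot z$, so your divisibility argument is the same computation; your explicit genericity remark $k_{010}-2b_{002}c^2/b^2\neq 0$ and the transversality check are harmless additions.
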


\begin{proof}
Let $\X$ be the vector field associated to system \eqref{ex2}. Then
$$
\X( x^2/a^2 + y^2/b^2 + z^2/c^2-1)= k_{010}y (x^2/a^2 + y^2/b^2 + z^2/c^2-1).
$$
So $\X$ is a vector field on the ellipsoid $\mathbb{E}^2$. 
Since $$\E_z= 1/2 (-((2 b_{002} c^2)/b^2) + k_{010}) y z,$$ it follows that $z=0$ is the only plane of the form $z=\mathrm{constant},$ such that it is invariant by the flow. This proves the proposition.
\end{proof}

\section*{Acknowledgements}

The first author is partially supported by the Ministerio de Ciencia, Innovaci\'on y Universidades, Agencia Estatal de Investigaci\'on grant PID2022-136613NB-100, the Ag\`encia de Gesti\'o d'Ajuts Universitaris i de Recerca grant 2021SGR00113, and the Reial Acad\`emia de Ci\`encies i Arts de Barcelona. The second author acknowledges support from the University of Texas at Dallas.

\end{document}